\documentclass[a4paper]{amsart}

\usepackage{tikz,mathrsfs}
\usepackage{graphicx}
\usepackage{amsmath}
\usepackage{amssymb}
\usepackage{oldgerm}
\usepackage[all]{xy}

\newtheorem{theorem}{Theorem}[section]
\newtheorem{lemma}[theorem]{Lemma}
\newtheorem{proposition}[theorem]{Proposition}
\newtheorem{corollary}[theorem]{Corollary}
\theoremstyle{definition}
\newtheorem*{definition}{Definition}
\newtheorem*{question}{Question}

\newtheorem{example}[theorem]{Example}

\DeclareMathOperator{\Image}{Im}

\DeclareMathOperator{\Hom}{Hom}
\DeclareMathOperator{\Ext}{Ext}

\DeclareMathOperator{\modules}{mod} \renewcommand{\mod}{\modules}

\DeclareMathOperator{\proj}{proj}

\DeclareMathOperator{\rank}{rank}
\DeclareMathOperator{\s}{\Sigma}
\DeclareMathOperator{\thick}{thick}

\DeclareMathOperator{\cx}{cx}
\DeclareMathOperator{\Ho}{H}

\newcommand{\K}{\mathbf{K}}
\newcommand{\D}{\mathbf{D}}
\newcommand{\Kac}{\K_{\rm tac}}

%
%
%

\usetikzlibrary{arrows}

\tikzset{>=stealth'}

\def\arrowLengthDisplayStyle{4ex}
\def\arrowHeightDisplayStyle{.8ex}
\def\arrowSkipDisplayStyle{.5ex}
\def\arrowLengthTextStyle{3ex}
\def\arrowHeightTextStyle{.8ex}
\def\arrowSkipTextStyle{.4ex}
\def\arrowLengthScriptStyle{2.5ex}
\def\arrowHeightScriptStyle{.6ex}
\def\arrowSkipScriptStyle{.3ex}
\def\arrowLengthScriptScriptStyle{2ex}
\def\arrowHeightScriptScriptStyle{.4ex}
\def\arrowSkipScriptScriptStyle{.2ex}

\renewcommand{\to}{\arrow{->}}

\newcommand{\MakeTikzArrowWithSuperscriptSubscript}[4]
 {
  \mathchoice
   { 
    \hspace*{\arrowSkipDisplayStyle}
    \begin{tikzpicture}[baseline]
     \draw [#1] (0,\arrowHeightDisplayStyle) -- node [above] {$#2$} node [below] {$#3$} (#4 * \arrowLengthDisplayStyle, \arrowHeightDisplayStyle);
    \end{tikzpicture}
    \hspace*{\arrowSkipDisplayStyle}
   }
   { 
    \hspace*{\arrowSkipTextStyle}
    \begin{tikzpicture}[baseline]
     \draw [#1] (0,\arrowHeightTextStyle) -- node [above] {$\scriptstyle #2$} node [below] {$\scriptstyle #3$} (#4 * \arrowLengthTextStyle, \arrowHeightTextStyle);
    \end{tikzpicture}
    \hspace*{\arrowSkipTextStyle}
   }
   { 
    \hspace*{\arrowSkipScriptStyle}
    \begin{tikzpicture}[baseline]
     \draw [#1] (0,\arrowHeightScriptStyle) -- node [above] {$\scriptscriptstyle #2$} node [below] {$\scriptscriptstyle #3$} (#4 * \arrowLengthScriptStyle, \arrowHeightScriptStyle);
    \end{tikzpicture}
    \hspace*{\arrowSkipScriptStyle}
   }
   { 
    \hspace*{\arrowSkipScriptScriptStyle}
    \begin{tikzpicture}[baseline]
     \draw [#1] (0,\arrowHeightScriptScriptStyle) -- node [above] {$\scriptscriptstyle #2$} node [below] {$\scriptscriptstyle #3$} (#4 * \arrowLengthScriptScriptStyle, \arrowHeightScriptScriptStyle);
    \end{tikzpicture}
    \hspace*{\arrowSkipScriptScriptStyle}
   }
 }

\newcommand{\MakeTikzArrowWithCentralLabel}[3]
 {
  \mathchoice
   { 
    \hspace*{\arrowSkipDisplayStyle}
    \begin{tikzpicture}[baseline]
     \draw [#1] (0,\arrowHeightDisplayStyle) -- node [fill=white,inner sep=1pt] {$#2$} (#3 * \arrowLengthDisplayStyle, \arrowHeightDisplayStyle);
    \end{tikzpicture}
    \hspace*{\arrowSkipDisplayStyle}
   }
   { 
    \hspace*{\arrowSkipTextStyle}
    \begin{tikzpicture}[baseline]
     \draw [#1] (0,\arrowHeightTextStyle) -- node [fill=white,inner sep=1pt] {$\scriptstyle #2$} (#3 * \arrowLengthTextStyle, \arrowHeightTextStyle);
    \end{tikzpicture}
    \hspace*{\arrowSkipTextStyle}
   }
   { 
    \hspace*{\arrowSkipScriptStyle}
    \begin{tikzpicture}[baseline]
     \draw [#1] (0,\arrowHeightScriptStyle) -- node [fill=white,inner sep=1pt] {$\scriptscriptstyle #2$} (#3 * \arrowLengthScriptStyle, \arrowHeightScriptStyle);
    \end{tikzpicture}
    \hspace*{\arrowSkipScriptStyle}
   }
   { 
    \hspace*{\arrowSkipScriptScriptStyle}
    \begin{tikzpicture}[baseline]
     \draw [#1] (0,\arrowHeightScriptScriptStyle) -- node [fill=white,inner sep=1pt] {$\scriptscriptstyle #2$} (#3 * \arrowLengthScriptScriptStyle, \arrowHeightScriptScriptStyle);
    \end{tikzpicture}
    \hspace*{\arrowSkipScriptScriptStyle}
   }
 }

\def\arrow#1{\def\lastArrowStyle{#1}
             \futurelet\testchar\arrowMaybeStreched}
\def\arrowMaybeStreched{\ifx[\testchar \let\next\arrowStreched
                         \else \let\next\arrowUnstreched \fi
                        \next}

\def\arrowStreched[#1]{\def\lastArrowStrech{#1}
                       \futurelet\testchar\arrowMaybeLabel}
\def\arrowUnstreched{\def\lastArrowStrech{1}
                     \futurelet\testchar\arrowMaybeLabel}

\def\arrowMaybeLabel{\ifx^\testchar \let\next\arrowSuperscript
                      \else \ifx_\testchar \let\next\arrowSubscript
                             \else \ifx~\testchar \let\next\arrowCentralLabel
                                    \else \let\next\arrowNoLabel
                                   \fi
                            \fi
                     \fi
                     \next}

\def\arrowSuperscript^#1{\def\lastArrowSuperscript{#1}
                         \futurelet\testchar\arrowSuperMaybeSub}
\def\arrowSuperMaybeSub{\ifx_\testchar \let\next\arrowSuperscriptSubscript
                         \else \let\next\arrowSuperscriptNoSubscript \fi
                        \next}

\def\arrowSubscript_#1{\def\lastArrowSubscript{#1}
                         \futurelet\testchar\arrowSubMaybeSuper}
\def\arrowSubMaybeSuper{\ifx^\testchar \let\next\arrowSubscriptSuperscript
                         \else \let\next\arrowSubscriptNoSuperscript \fi
                        \next}

\def\arrowSuperscriptSubscript_#1{\def\lastArrowSubscript{#1}
                                  \arrowDrawSupSub}
\def\arrowSuperscriptNoSubscript{\def\lastArrowSubscript{}
                                 \arrowDrawSupSub}
\def\arrowSubscriptSuperscript^#1{\def\lastArrowSuperscript{#1}
                                  \arrowDrawSupSub}
\def\arrowSubscriptNoSuperscript{\def\lastArrowSuperscript{}
                                 \arrowDrawSupSub}
\def\arrowNoLabel{\def\lastArrowSuperscript{}
                  \def\lastArrowSubscript{}
                  \arrowDrawSupSub}

\def\arrowCentralLabel~#1{\MakeTikzArrowWithCentralLabel{\lastArrowStyle}{#1}{\lastArrowStrech}}
\def\arrowDrawSupSub{\MakeTikzArrowWithSuperscriptSubscript{\lastArrowStyle}{\lastArrowSuperscript}{\lastArrowSubscript}{\lastArrowStrech}}

\begin{document}

\title{The Gorenstein defect category}

\author{Petter Andreas Bergh, David A.\ Jorgensen \& Steffen Oppermann}

\address{Petter Andreas Bergh \\ Institutt for matematiske fag \\
  NTNU \\ N-7491 Trondheim \\ Norway}
\email{bergh@math.ntnu.no}

\address{David A.\ Jorgensen \\ Department of mathematics \\ University
of Texas at Arlington \\ Arlington \\ TX 76019 \\ USA}
\email{djorgens@uta.edu}

\address{Steffen\ Oppermann \\ Institutt for matematiske fag \\
  NTNU \\ N-7491 Trondheim \\ Norway}
\email{steffen.oppermann@math.ntnu.no}

\subjclass[2010]{13H10, 16E65, 18E30}

\keywords{Triangulated categories, Gorenstein rings}

\begin{abstract}
We consider the homotopy category of complexes of projective modules over a Noetherian ring. Truncation at degree zero induces a fully faithful triangle functor from the totally acyclic complexes to the singularity category. We show that if the ring is either Artin or commutative Noetherian local, then the functor is dense if and only if the ring is Gorenstein. Motivated by this, we define the \emph{Gorenstein defect category} of the ring, a category which in some sense measures how far the ring is from being Gorenstein.
\end{abstract}

\maketitle

\section{Introduction}

Given a ring, one can associate to it certain triangulated categories: derived categories, homotopy categories, and various triangulated subcategories of these, such as bounded derived categories or homotopy categories of acyclic complexes. When the ring is Gorenstein, classical results by Buchweitz (cf.\ \cite{Buchweitz}) show that some of these triangulated categories are equivalent: the stable category of maximal Cohen-Macaulay modules, the singularity category of finitely generated modules, and the homotopy category of totally acyclic complexes of finitely generated projective modules. Thus, for Gorenstein rings, these triangulated categories (together with their respective cohomology theories) virtually coincide.

In this paper, we provide a categorical characterization of Gorenstein rings. Let $A$ be a left Noetherian ring, and $\proj A$ the category of finitely generated projective left $A$-modules. Brutal truncation at degree zero induces a map from the homotopy category $\Kac(\proj A)$ of totally acyclic complexes to the homotopy category $\K^{-,\rm b}(\proj A)$ of right bounded eventually acyclic complexes. However, this map is not a functor. We therefore consider instead the singularity category
$$\D^{\rm b}_{\rm sg}(A) \stackrel{\text{def}}{=} \K^{-,\rm b}(\proj A) / \K^{\rm b}(\proj A)$$
of $A$, where $\K^{\rm b}(\proj A)$ is the homotopy category of bounded complexes. Brutal truncation now induces a triangle functor
$$\beta_{\proj A} \colon \Kac(\proj A) \to \D^{\rm b}_{\rm sg}(A),$$
and we show that this functor is always full and faithful. The main result of section \ref{functor}, Theorem \ref{Gorenstein}, shows that the properties of this functor actually characterize Gorenstein rings. Namely, if $A$ is either an Artin ring or a commutative Noetherian local ring, then the functor $\beta_{\proj A}$ is dense if and only if $A$ is Gorenstein. The ``if" part here is classical: it is part of \cite[Theorem 4.4.1]{Buchweitz}. What is new is the ``only if" part, which may be considered as a triangulated category reformulation of Auslander and Bridger \cite[Theorem (4.20)]{AuslanderBridger}, stating that rings over which every finitely generated module has finite Gorenstein dimension are precisely the Gorenstein rings.  

Motivated by the discussion above, in Section \ref{defect} we define various triangulated defect categories as follows. Let $\mathcal C$ be a thick subcategory of $\Kac(\proj A)$ and 
$\beta_{\mathcal C}$ the restriction of $\beta_{\proj A}$ to $\mathcal C$. We define the \emph{$\mathcal C$ defect category} $\D^{\rm b}_{\mathcal C}(A)$ of $A$ as the Verdier quotient
$$
\D^{\rm b}_{\mathcal C}(A) \stackrel{\text{def}}{=} \D^{\rm b}_{\rm sg}(A) / \langle \Image\beta_{\mathcal C} \rangle,
$$
where $\langle \Image \beta_{\mathcal C} \rangle$ is the isomorphism closure of the image of $\beta_{\mathcal C}$ in $\D^{\rm b}_{\rm sg}(A)$: this is a thick subcategory. Then Theorem \ref{Gorenstein} translates to the following: if $A$ is either an Artin ring or a commutative Noetherian local ring, then $\D^{\rm b}_{\mathcal C}(A) =0$ for $\mathcal C= \Kac(\proj A)$ if and only if $A$ is Gorenstein; in this case we refer to $\D^{\rm b}_{\mathcal C}(A)$ as the \emph{Gorenstein defect category}. When $A$ is commutative local and $\mathcal C$ is the thick subcategory of $\Kac(\proj A)$ consisting of complexes of finite complexity, we show that $\D^{\rm b}_{\mathcal C}(A) =0$ if and only if $A$ is a complete intersection.  The dimension of the Gorenstein defect category, in the sense of Roquier, is therefore in some sense a measure of ``how far" the ring is from being Gorenstein.  

We note that the construction in \cite{IyengarKrause} yields an interesting alternate defect category characterizing Gorenstein rings, namely the Verdier quotient formed by the acyclic complexes of projective modules modulo the totally acyclic complexes.

We end the paper with an example consisting of the `smallest' ring for which the Gorenstein defect category is interesting, and show that the dimension of this category is at most one.

\section{Preliminaries}

Let $\mathscr{P}$ be an additive category, and $\K \mathscr{P}$ the homotopy category of complexes in $\mathscr{P}$. This is a triangulated category, with suspension $\s \colon \K \mathscr{P} \to \K \mathscr{P}$ given by shifting a complex one degree to the left, and changing the sign of its differential. That is, for a complex $C \in \K \mathscr{P}$ with differential $d$, the complex $\s C$ has $C_{n-1}$ in degree $n$, and $-d$ as differential. The (distinguished) triangles in $\K \mathscr{P}$ are the sequences of objects and maps that are isomorphic to sequences of the form
$$
C \xrightarrow{f} C' \to C(f) \to \s C
$$
for some map $f$ and its mapping cone $C(f)$.

We say that a complex $C$ in $\mathscr{P}$ is \emph{acyclic} if the complex $\Hom_{\mathscr{P}}(P, C)$ of abelian groups is acyclic for all objects $P \in \mathscr{P}$. If in addition $\Hom_{\mathscr{P}}(C,P)$ is acyclic for all $P \in \mathscr{P}$, then $C$ is \emph{totally acyclic}. Moreover, $C$ is \emph{eventually acyclic} if for all objects $P \in \mathscr{P}$, the complex $\Hom_{\mathscr{P}}(P, C)$ is eventually acyclic, i.e.\ $\Ho_n \left ( \Hom_{\mathscr{P}}(P, C) \right ) =0$ for $|n| \gg 0$. Note that we cannot define acyclicity directly for complexes in $\mathscr{P}$, since the category is only assumed to be additive.

We shall be working with the following full subcategories of $\K \mathscr{P}$ (the definitions are up to isomorphism in $\K \mathscr{P}$):
\begin{align*} 
\Kac \mathscr{P} & = \{ C \in \K \mathscr{P} \mid C \text{ is totally acyclic} \} \\
\K^{-, \rm b} \mathscr{P} & = \{C \in \K \mathscr{P} \mid C_n =0 \text{ for } n \ll 0
 \text{ and $C$ is eventually acyclic} \} \\
\K^{\rm b} \mathscr{P} & = \{C \in \K \mathscr{P} \mid C_n =0 \text{ for } |n| \gg 0 \}.
\end{align*}
These are all triangulated subcategories of $\K \mathscr{P}$. For example, when $\mathscr{P}$ is the category of finitely generated left projective modules over a left Noetherian ring $A$, then $\K^{\rm b} \mathscr{P}$ is by definition the category of perfect complexes. Moreover, in this setting it is well known that the triangulated categories $\K^{-, \rm b} \mathscr{P}$ and $\D^{\rm b}(\mod A)$ are equivalent, where $\mod A$ is the category of finitely generated left $A$-modules.

The category $\K^{\rm b} \mathscr{P}$ is a thick subcategory of $\K^{-, \rm b} \mathscr{P}$, that is, a triangulated subcategory closed under direct summands. The Verdier quotient $\K^{-, \rm b} \mathscr{P} / \K^{\rm b} \mathscr{P}$ is therefore a well defined triangulated category. Recall that the objects in this quotient are the same as the objects in $\K^{-, \rm b} \mathscr{P}$. A morphism $C \to C'$ in the quotient is an equivalence class of diagrams of the form
\begin{center}
  \begin{tikzpicture}[text centered]
    \node (C1) at (0,0){$C$};
    \node (C2) at (2,0){$C'$};
    \node (D) at (1,1){$D$};
    
    \begin{scope}[->,font=\scriptsize,midway]
      \draw (D) -- node[above left=-2pt]{$g$} (C1);
      \draw (D) -- node[above right=-2pt]{$f$} (C2);
      
    \end{scope}
  \end{tikzpicture}
\end{center}
where $f$ and $g$ are morphisms in $\K^{-, \rm b} \mathscr{P}$, and $g$ has the property that its mapping cone $C(g)$ belongs to $\K^{\rm b} \mathscr{P}$. Two such morphisms $(g,D,f)$ and $(g',D',f')$ are equivalent if there exists a third such morphism $(g'',D'',f'')$, and two morphisms $h \colon D'' \to D$ and $h' \colon D'' \to D'$ in $\K^{-, \rm b} \mathscr{P}$, such that the diagram
\begin{center}
  \begin{tikzpicture}[text centered]
    \node (C1) at (-0.5,0){$C$};
    \node (C2) at (2.5,0){$C'$};
    \node (D) at (1,1.2){$D$};
    \node (D') at (1,-1.2){$D'$};
    \node (D'') at (1,0){$D''$};
    
    \begin{scope}[->,font=\scriptsize,midway]
      \draw (D) -- node[above left=-2pt]{$g$} (C1);
      \draw (D) -- node[above right=-2pt]{$f$} (C2);
      \draw (D') -- node[below left=-2pt]{$g'$} (C1);
      \draw (D') -- node[below right=-2pt]{$f'$} (C2);
      \draw (D'') -- node[above]{$g''$} (C1);
      \draw (D'') -- node[above]{$f''$} (C2);
      \draw (D'') -- node[right]{$h$} (D);
      \draw (D'') -- node[right]{$h'$} (D');
      
    \end{scope}
  \end{tikzpicture}
\end{center}
is commutative. For further details, we refer to \cite[Chapter 2]{Neeman}. The natural triangle functor $\K^{-, \rm b} \mathscr{P} \to \K^{-, \rm b} \mathscr{P} / \K^{\rm b} \mathscr{P}$ maps an object to itself, and a morphism $f \colon C \to C'$ to the equivalence class of the diagram
\begin{center}
  \begin{tikzpicture}[text centered]
    \node (C1) at (0,0){$C$};
    \node (C2) at (2,0){$C'$};
    \node (D) at (1,1){$C$};
    
    \begin{scope}[->,font=\scriptsize,midway]
      \draw (D) -- node[above left=-2pt]{$1$} (C1);
      \draw (D) -- node[above right=-2pt]{$f$} (C2);
      
    \end{scope}
  \end{tikzpicture}
\end{center}
In Theorem \ref{truncation}, we establish a fully faithful triangle functor from $\Kac \mathscr{P}$ to the quotient $\K^{-, \rm b} \mathscr{P} / \K^{\rm b} \mathscr{P}$. To avoid too many technicalities in the proof, we first prove the following lemma. It allows us to complete certain morphisms and homotopies of truncated complexes. Given an integer $n$ and a  complex
$$
C \colon \quad \cdots \xrightarrow{d_{n+3}}  C_{n+2} \xrightarrow{d_{n+2}} C_{n+1} \xrightarrow{d_{n+1}} C_n \xrightarrow{d_{n}} C_{n-1} \xrightarrow{d_{n-1}} C_{n-2} \xrightarrow{d_{n-2}} \cdots
$$
in $\mathscr{P}$, we denote its \emph{brutal truncation} at degree $n$ by 
$$
\beta_{\ge n}(C)\colon \quad \cdots \xrightarrow{d_{n+3}}  C_{n+2} \xrightarrow{d_{n+2}} C_{n+1} \xrightarrow{d_{n+1}} C_n \xrightarrow{} 0 \xrightarrow{} 0 \xrightarrow{} \cdots
$$
Note that when $C \in \Kac \mathscr{P}$, then $\beta_{\ge n}(C) \in \K^{-, \rm b} \mathscr{P}$.

\begin{lemma}\label{extension}
\sloppy Let $\mathscr{P}$ be an a additive category, and $C,D$ two complexes in $ \mathscr{P}$ with $C$ totally acyclic. Furthermore, let $n$ be an integer, and $f \colon \beta_{\ge n}(C) \to \beta_{\ge n}(D)$ a chain map. 
\begin{enumerate}
\item[(a)] The map $f$ can be extended to a chain map $\widehat{f} \colon C \to D$ (with $\widehat{f}_i =f_i$ for all $i\ge n$). 
\item[(b)] Let $\pi \colon C \to \beta_{\ge n}(C)$ be the natural chain map, and consider the composite chain map $f \circ \pi \colon C \to \beta_{\ge n}(D)$. If $f \circ \pi$ is nullhomotopic through a homotopy $h$, then $h$ can be extended to a homotopy $\widehat{h}$ making $\widehat{f}$ nullhomotopic.
\end{enumerate}
\end{lemma}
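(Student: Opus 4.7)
\emph{Plan.} Both parts rest on the same consequence of total acyclicity of $C$: for every object $P \in \mathscr{P}$ and every integer $k$, since $\Hom_{\mathscr{P}}(C,P)$ is acyclic, any morphism $\alpha \colon C_k \to P$ with $\alpha\, d_{k+1}^C = 0$ factors as $\alpha = \beta\, d_k^C$ for some $\beta \colon C_{k-1} \to P$. I will build $\widehat{f}$ and $\widehat{h}$ by downward induction on the degree, invoking this factoring principle at each step with $P$ chosen among the objects $D_j$ of the target complex.

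For part (a), set $\widehat{f}_k = f_k$ for $k \ge n$ and construct the remaining $\widehat{f}_k$ by downward induction: given $\widehat{f}_k$, apply the factoring principle to $d_k^D \widehat{f}_k \colon C_k \to D_{k-1}$ to obtain $\widehat{f}_{k-1}$ with $\widehat{f}_{k-1}\, d_k^C = d_k^D \widehat{f}_k$. The required vanishing $d_k^D \widehat{f}_k\, d_{k+1}^C = 0$ holds at $k = n$ by the chain-map property of $f$ at degree $n+1$ together with $d_n^D d_{n+1}^D = 0$, and at $k < n$ by the inductive relation $\widehat{f}_k\, d_{k+1}^C = d_{k+1}^D \widehat{f}_{k+1}$ together with $d_k^D d_{k+1}^D = 0$.

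For part (b), keep $\widehat{h}_k = h_k$ for $k \ge n-1$ and produce $\widehat{h}_k$ for $k \le n-2$ by downward induction so as to force the homotopy identity $\widehat{h}_k\, d_{k+1}^C = \widehat{f}_{k+1} - d_{k+2}^D \widehat{h}_{k+1}$. Applying the factoring principle to the right-hand side, viewed as a morphism $C_{k+1} \to D_{k+1}$, requires that it kill $d_{k+2}^C$. For $k \le n-3$ this follows routinely from the chain-map property of $\widehat{f}$ and the inductive hypothesis, together with $d_{k+2}^D d_{k+3}^D = 0$. The subtle case is the base $k = n-2$: the homotopy condition we were handed at position $n-1$ is vacuous because $(\beta_{\ge n}(D))_{n-1} = 0$, so the required vanishing $(\widehat{f}_{n-1} - d_n^D h_{n-1})\, d_n^C = 0$ must instead be extracted from the non-trivial relation at position $n$, namely $f_n = d_{n+1}^D h_n + h_{n-1}\, d_n^C$, combined with $d_n^D d_{n+1}^D = 0$ and $\widehat{f}_{n-1}\, d_n^C = d_n^D f_n$ from part (a).

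The only delicate point is this base-case bookkeeping at the truncation boundary $n-1$, where one must replace the trivial homotopy relation in $\beta_{\ge n}(D)$ by the non-trivial relation in $D$ coming from degree $n$. Once that is handled, both inductions proceed uniformly and yield the desired extensions.
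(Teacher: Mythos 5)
Your proposal is correct and follows essentially the same route as the paper: a downward induction using the exactness of $\Hom_{\mathscr{P}}(C,D_j)$ applied to the complex $C$ (total acyclicity), with the base case of the homotopy extension handled exactly as in the paper, via the computation $(\widehat{f}_{n-1}-d_n^D h_{n-1})\circ d_n^C = d_n^D f_n - d_n^D(f_n - d_{n+1}^D h_n)=0$. The only difference is that you spell out the iteration/inductive steps that the paper leaves implicit.
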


\begin{proof}
(a) The chain map $f$ is given by the solid part of the commutative diagram
\[ \begin{tikzpicture}
 \matrix [row sep=1cm, column sep=1cm] {
  \node (a3) {$\cdots$}; & \node (a2) {$C_{n+2}$}; & \node (a1) {$C_{n+1}$}; & \node (a0) {$C_n$}; & \node (a-1) {$C_{n-1}$}; & \node (a-2) {$C_{n-2}$}; & \node (a-3) {$\cdots$};\\  
  \node (b3) {$\cdots$}; & \node (b2) {$D_{n+2}$}; & \node (b1) {$D_{n+1}$}; & \node (b0) {$D_n$}; & \node (b-1) {$D_{n-1}$}; & \node (b-2) {$D_{n-2}$}; & \node (b-3) {$\cdots$}; \\
 };
 \draw [->] (a2) -- node [right] {$\scriptstyle f_{n+2}$} (b2);
 \draw [->] (a1) -- node [right] {$\scriptstyle f_{n+1}$} (b1);
 \draw [->] (a0) -- node [right] {$\scriptstyle f_{n}$} (b0);
 \draw [->,dashed] (a-1) -- node [right] {$\scriptstyle f_{n-1}$} (b-1);
 \foreach \i/\j/\l in {3/2/+3,2/1/+2,1/0/+1,0/-1/,-1/-2/-1,-2/-3/-2}
  {
   \draw [->] (a\i) -- node [above] {$\scriptstyle d_{n\l}^C$} (a\j);
   \draw [->] (b\i) -- node [above] {$\scriptstyle d_{n\l}^D$} (b\j);
  };
\end{tikzpicture} \]
and it suffices to find a map $f_{n-1}$ as indicated, making the square to its left commutative.
The composition $d_n^D \circ f_n \circ d_{n+1}^C$ is zero, and by assumption the sequence
\[ \Hom_{\mathscr{P}}(C_{n-1}, D_{n-1}) \xrightarrow{(d_n^C)^*} \Hom_{\mathscr{P}}(C_{n}, D_{n-1}) \xrightarrow{(d_{n+1}^C)^*} \Hom_{\mathscr{P}}(C_{n+1}, D_{n-1}) \]
is exact. Therefore, there exists a map $f_{n-1} \colon C_{n-1} \to D_{n-1}$ such that $f_{n-1} \circ d_n^C = d_n^D \circ f_n$.

(b) The homotopy $h$ is given by maps $h_{n-1},h_n,h_{n+1}, \dots$ as in the diagram
\[ \begin{tikzpicture}
 \matrix [row sep=1cm, column sep=1cm] {
  \node (a3) {$\cdots$}; & \node (a2) {$C_{n+2}$}; & \node (a1) {$C_{n+1}$}; & \node (a0) {$C_n$}; & \node (a-1) {$C_{n-1}$}; & \node (a-2) {$C_{n-2}$}; & \node (a-3) {$\cdots$};\\  
  \node (b3) {$\cdots$}; & \node (b2) {$D_{n+2}$}; & \node (b1) {$D_{n+1}$}; & \node (b0) {$D_n$}; & \node (b-1) {$D_{n-1}$}; & \node (b-2) {$D_{n-2}$}; & \node (b-3) {$\cdots$}; \\
 };
 \draw [->] (a2) -- node [right] {$\scriptstyle f_{n+2}$} (b2);
 \draw [->] (a1) -- node [right] {$\scriptstyle f_{n+1}$} (b1);
 \draw [->] (a0) -- node [right] {$\scriptstyle f_{n}$} (b0);
 \draw [->] (a-1) -- node [right] {$\scriptstyle f_{n-1}$} (b-1);
 \draw [->] (a-2) -- node [right] {$\scriptstyle f_{n-2}$} (b-2);
 \draw [->,dashed] (a-3) -- node [right] {$\scriptstyle h_{n-3}$} (b-2);
 \draw [->,dashed] (a-2) -- node [right] {$\scriptstyle h_{n-2}$} (b-1);
 \draw [->] (a-1) -- node [right] {$\scriptstyle h_{n-1}$} (b0);
 \draw [->] (a0) -- node [right] {$\scriptstyle h_n$} (b1);
 \draw [->] (a1) -- node [right] {$\scriptstyle h_{n+1}$} (b2);
 \draw [->] (a2) -- node [right] {$\scriptstyle h_{n+2}$} (b3);
 \foreach \i/\j/\l in {3/2/+3,2/1/+2,1/0/+1,0/-1/,-1/-2/-1,-2/-3/-2}
  {
   \draw [->] (a\i) -- node [above] {$\scriptstyle d_{n\l}^C$} (a\j);
   \draw [->] (b\i) -- node [above] {$\scriptstyle d_{n\l}^D$} (b\j);
  };
\end{tikzpicture} \]
with $f_i = d^D_{i+1} \circ h_n + h_{i-1} \circ d^C_i$ for all $i \ge n$. We must find maps $h_{n-2},h_{n-3}, \dots$ completing the homotopy.

To find the map $h_{n-2}$, consider the map $f_{n-1} - d_n^D \circ h_{n-1}$ in $\Hom_{\mathscr{P}}(C_{n-1},D_{n-1})$, for which we obtain
$$(f_{n-1} - d_n^D \circ h_{n-1}) \circ d_n^C = d_n^D \circ f_n - d_n^D \circ ( f_n - d_{n+1}^D \circ h_n) =0.$$
Since the sequence
\[ \Hom_{\mathscr{P}}(C_{n-2}, D_{n-1}) \xrightarrow{(d_{n-1}^C)^*} \Hom_{\mathscr{P}}(C_{n-1}, D_{n-1}) \xrightarrow{(d_{n}^C)^*} \Hom_{\mathscr{P}}(C_{n}, D_{n-1}) \]
is exact, there exists a map $h_{n-2} \colon C_{n-2} \to D_{n-1}$ such that
$$f_{n-1} - d_n^D \circ h_{n-1} = h_{n-2} \circ d_{n-1}^C.$$
Iterating this procedure, we obtain the maps $h_{n-3},h_{n-4}, \dots$ giving $\widehat{h}$.
\end{proof}

\section{Properties of the functor}\label{functor}

We are now ready to prove Theorem \ref{truncation}. It establishes a fully faithful triangle functor from $\Kac \mathscr{P}$ to the quotient $\K^{-, \rm b} \mathscr{P} / \K^{\rm b} \mathscr{P}$, mapping a complex $C$ to its brutal truncation $\beta_{\ge 0}(C)$
at degree zero. Note that brutal truncation does not define a functor between homotopy categories.

\begin{theorem}\label{truncation}
For an additive category $\mathscr{P}$, brutal truncation at degree zero induces a fully faithful triangle functor
\[ \beta_{\mathscr{P}} \colon \Kac \mathscr{P} \to \K^{-, \rm b} \mathscr{P} / \K^{\rm b} \mathscr{P}. \]
\end{theorem}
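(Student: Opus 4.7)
The plan is to define $\beta_{\mathscr{P}}$ on objects by $C \mapsto \beta_{\ge 0}(C) \in \K^{-,\rm b}\mathscr{P}$ and on morphisms by sending a chain map $f \colon C \to D$ to the class of its brutal truncation $\beta_{\ge 0}(f)$ in the quotient. The crucial step for well-definedness is to check that if $f$ is nullhomotopic via $h = (h_i)$, then $[\beta_{\ge 0}(f)] = 0$ in the quotient. Taking the partial homotopy $\tilde h_i = h_i$ for $i \ge 0$ and $\tilde h_i = 0$ otherwise, one computes that $\beta_{\ge 0}(f) - d^D\tilde h - \tilde h d^C$ is a chain map $g$ concentrated in degree zero with $g_0 = h_{-1}d^C_0$, and $g$ factors through the bounded complex $[C_{-1}]$ placed in degree zero via $\alpha_0 = d^C_0$ and $\gamma_0 = h_{-1}$ (both chain maps by $d^C_0 d^C_1 = 0$). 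Hence $[g]=0$ in the quotient. Additivity of $\beta_{\mathscr{P}}$ and compatibility with $\s$ and with triangles then follow routinely, the discrepancies between brutal truncation and these operations being bounded complexes.

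For faithfulness, I would first establish the auxiliary fact that $\Hom_{\K\mathscr{P}}(C, T) = 0$ whenever $C$ is totally acyclic and $T$ is bounded above: a nullhomotopy is built from the top down, the existence of each new homotopy component coming from the acyclicity of $\Hom_{\mathscr{P}}(C, P)$. If $[\beta_{\ge 0}(f)] = 0$, then $\beta_{\ge 0}(f) = vu$ in $\K^{-,\rm b}\mathscr{P}$ with $u \colon \beta_{\ge 0}(C) \to T$, $v \colon T \to \beta_{\ge 0}(D)$ and $T \in \K^{\rm b}\mathscr{P}$; composing with the canonical projection $\pi \colon C \to \beta_{\ge 0}(C)$, the auxiliary fact kills $u\pi$, so $\beta_{\ge 0}(f)\circ \pi$ is nullhomotopic, and Lemma \ref{extension}(b) then gives a nullhomotopy of the extension $\widehat{\beta_{\ge 0}(f)} \colon C \to D$ from Lemma \ref{extension}(a). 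The leftover $f - \widehat{\beta_{\ge 0}(f)}$ is supported in degrees $< 0$ and factors through the bounded-above subcomplex $\beta_{\le -1}(D) \hookrightarrow D$; the same top-down construction again nullhomotopes it, yielding $f \simeq 0$.

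For fullness, I would represent a morphism by a roof $\beta_{\ge 0}(C) \xleftarrow{s} W \xrightarrow{\phi_0} \beta_{\ge 0}(D)$ with $T := \text{cone}(s) \in \K^{\rm b}\mathscr{P}$ and form the associated triangle $W \xrightarrow{s} \beta_{\ge 0}(C) \xrightarrow{\rho} T \to \s W$. The strategy is to replace $W$ by $\beta_{\ge n}(C)$ for $n$ sufficiently negative. The obstruction to lifting the canonical truncation map $\pi^0_n \colon \beta_{\ge n}(C) \to \beta_{\ge 0}(C)$ through $s$ is $\rho\circ\pi^0_n \colon \beta_{\ge n}(C) \to T$, and restricting the top-down nullhomotopy from the faithfulness argument (its terms vanish in degrees where $T$ does) yields a nullhomotopy of this composite once $n$ lies below the support of $T$. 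Hence a lift $\sigma \colon \beta_{\ge n}(C) \to W$ exists with $s\sigma = \pi^0_n$ in $\K^{-,\rm b}\mathscr{P}$, and the equivalent roof $(\pi^0_n, \beta_{\ge n}(C), \phi_0\sigma)$ carries an honest chain map $\phi_0\sigma \colon \beta_{\ge n}(C) \to \beta_{\ge 0}(D)$. Fill in the missing degrees $n \le i < 0$ by the argument of Lemma \ref{extension}(a) to promote this to a chain map $\beta_{\ge n}(C) \to \beta_{\ge n}(D)$, then apply Lemma \ref{extension}(a) itself to extend to $f \colon C \to D$; a direct roof comparison gives $\beta_{\mathscr{P}}([f]) = [\phi]$. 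This lifting step is the main obstacle of the whole proof, being the one place where the calculus-of-fractions description of the Verdier quotient has to be reconciled with the concrete structure of truncated totally acyclic complexes.
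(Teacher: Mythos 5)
Your proposal is correct, and its skeleton is the paper's: brutal truncation on objects, the identical well-definedness computation (truncate the homotopy, observe the leftover map $h_{-1}\circ d_0^C$ in degree zero factors through the stalk complex on $C_{-1}$), and Lemma \ref{extension} as the engine for extending chain maps and homotopies. Where you genuinely diverge is in how faithfulness and, especially, fullness are reduced to that lemma. You isolate the orthogonality statement $\Hom_{\K\mathscr{P}}(C,T)=0$ for $C$ totally acyclic and $T$ bounded above, proved top-down from acyclicity of $\Hom_{\mathscr{P}}(C,P)$; the paper never states this, instead proving faithfulness by truncating at a degree $n$ above the support of the bounded factor and applying Lemma \ref{extension}(b) with $f$ itself as the extension, and proving fullness by arranging the apex $C'$ of the roof to agree with $\beta_{\ge 0}(C)$, hence with $C$, in all high degrees (so that $\beta_{\ge n}(g)$ is literally the identity), then verifying $\psi=\beta(\widehat f)$ via a dominating roof with apex $\beta_{\ge -1}(C)$ and two further applications of Lemma \ref{extension}(b). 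Your fullness argument instead lifts $\pi^0_n$ through the denominator $s$ using the triangle on $\mathrm{cone}(s)$ together with the restricted nullhomotopy (valid, as you note, because its components land in degrees where $T$ vanishes once $n$ is below the support of $T$), after which the comparison of the roof $(\pi^0_n,\beta_{\ge n}(C),\phi_0\sigma)$ with $(1,\beta_{\ge 0}(C),\beta_{\ge 0}(f))$ is immediate. What your route buys: the explicit orthogonality lemma is clean and reusable, and the triangle-lifting sidesteps the paper's rather quick assertion that $C'$ can be taken to coincide with $C$ in high degrees. What the paper's route buys: everything stays inside Lemma \ref{extension} plus elementary truncation bookkeeping, and its faithfulness is one step shorter, whereas yours needs the extra (correct) observation that the leftover $f-\widehat{\beta_{\ge 0}(f)}$ is concentrated in negative degrees, factors through the subcomplex $\beta_{\le -1}(D)$, and is killed by the same top-down construction.
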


\begin{proof}
To simplify notation, we denote $\beta_{\mathscr{P}}$ by just $\beta$. The first issue to address is well-definedness. Let $f \colon C \to D$ be a map of complexes in $\Kac \mathscr{P}$, as indicated in the following diagram:
\[ \begin{tikzpicture}
 \matrix [row sep=1cm, column sep=1.5cm] {
  \node (a3) {$\cdots$}; & \node (a2) {$C_2$}; & \node (a1) {$C_1$}; & \node (a0) {$C_0$}; & \node (a-1) {$C_{-1}$}; & \node (a-2) {$\cdots$}; \\
  \node (b3) {$\cdots$}; & \node (b2) {$D_2$}; & \node (b1) {$D_1$}; & \node (b0) {$D_0$}; & \node (b-1) {$D_{-1}$}; & \node (b-2) {$\cdots$}; \\
 };
 \foreach \i in {-1,...,2} \draw [->] (a\i) -- node [right] {$\scriptstyle f_{\i}$} (b\i);
 \foreach \i/\j in {3/2,2/1,1/0,0/-1,-1/-2}
  {
   \draw [->] (a\i) -- node [above] {$\scriptstyle d_{\i}^C$} (a\j);
   \draw [->] (b\i) -- node [above] {$\scriptstyle d_{\i}^D$} (b\j);
   \draw [dashed, ->] (a\j) -- node [below right=-2pt] {$\scriptstyle h_{\j}$} (b\i);
  };
\end{tikzpicture} \]
It suffices to show that if $f$ vanishes in $\Kac \mathscr{P}$, that is, if there is a homotopy $h$ as indicated by the dashed arrows above, then $\beta_{\ge 0} (f)$ vanishes in $\K^{-, \rm b} \mathscr{P} / \K^{\rm b} \mathscr{P}$. Using this homotopy $h$, we see that the map
\[ \begin{tikzpicture}
 \matrix [row sep=1cm, column sep=1.5cm] {
  \node (a3) {$\cdots$}; & \node (a2) {$C_2$}; & \node (a1) {$C_1$}; & \node (a0) {$C_0$}; & \node (a-1) {$0$}; & \node (a-2) {$\cdots$}; \\
  \node (b3) {$\cdots$}; & \node (b2) {$D_2$}; & \node (b1) {$D_1$}; & \node (b0) {$D_0$}; & \node (b-1) {$0$}; & \node (b-2) {$\cdots$}; \\
 };
 \foreach \i in {1,...,2} \draw [->] (a\i) -- node [right] {$\scriptstyle f_{\i}$} (b\i);
 \foreach \i/\j in {3/2,2/1,1/0}
  {
   \draw [->] (a\i) -- node [above] {$\scriptstyle d_{\i}^C$} (a\j);
   \draw [->] (b\i) -- node [above] {$\scriptstyle d_{\i}^D$} (b\j);
  };
  \draw [->] (a0) -- node [above] {$\scriptstyle$} (a-1);
   \draw [->] (a-1) -- node [above] {$\scriptstyle$} (a-2);
   \draw [->] (b0) -- node [above] {$\scriptstyle$} (b-1);
   \draw [->] (b-1) -- node [above] {$\scriptstyle$} (b-2);
   \draw [->] (a-1) -- node [above] {$\scriptstyle$} (b-1);
   \draw [->] (a0) -- node [right] {$\scriptstyle f_0-h_{-1} \circ d_0^C$} (b0);
\end{tikzpicture} \]
is nullhomotopic in $\K^{-, \rm b} \mathscr{P}$. Consequently, the map $\beta_{\ge 0} (f)$ is homotopic to the map
\[ \begin{tikzpicture}
 \matrix [row sep=1cm, column sep=1.5cm] {
  \node (a3) {$\cdots$}; & \node (a2) {$C_2$}; & \node (a1) {$C_1$}; & \node (a0) {$C_0$}; & \node (a-1) {$0$}; & \node (a-2) {$\cdots$}; \\
  \node (b3) {$\cdots$}; & \node (b2) {$D_2$}; & \node (b1) {$D_1$}; & \node (b0) {$D_0$}; & \node (b-1) {$0$}; & \node (b-2) {$\cdots$}; \\
 };
 \foreach \i/\j in {3/2,2/1,1/0}
  {
   \draw [->] (a\i) -- node [above] {$\scriptstyle d_{\i}^C$} (a\j);
   \draw [->] (b\i) -- node [above] {$\scriptstyle d_{\i}^D$} (b\j);
  };
  \draw [->] (a0) -- node [above] {} (a-1);
   \draw [->] (a-1) -- node [above] {} (a-2);
   \draw [->] (b0) -- node [above] {} (b-1);
   \draw [->] (b-1) -- node [above] {} (b-2);
   \draw [->] (a-1) -- node [above] {} (b-1);
   \draw [->] (a0) -- node [right] {$\scriptstyle h_{-1} \circ d_0^C$} (b0);
   \draw [->] (a1) -- node [right] {$\scriptstyle 0$} (b1);
   \draw [->] (a2) -- node [right] {$\scriptstyle 0$} (b2);
\end{tikzpicture} \]
in $\K^{-, \rm b} \mathscr{P}$. Clearly, this map factors through the stalk complex with $C_{-1}$ in degree zero. Therefore $\beta (f)$, which equals the image of $\beta_{\ge 0} (f)$ in the quotient $\K^{-, \rm b} \mathscr{P} / \K^{\rm b} \mathscr{P}$, vanishes by  \cite[Lemma 2.1.26]{Neeman}. This shows that the functor $\beta$ is well-defined.

It is easy to check that $\beta$ is a triangle functor. The natural isomorphism $\beta \circ \Sigma \to \text{} \Sigma \circ \beta$ is given by
\[ \begin{tikzpicture}
 \matrix [row sep=.15cm, column sep=1cm] {
  & & \node [rotate=45,inner sep=0pt, outer sep=0pt] {\scriptsize deg $2$}; & \node [rotate=45,inner sep=0pt, outer sep=0pt] {\scriptsize deg $1$}; & \node [rotate=45,inner sep=0pt, outer sep=0pt] {\scriptsize deg $0$}; & \node [rotate=45,inner sep=0pt, outer sep=0pt] {\scriptsize deg $-1$}; \\
  \node (A) {$\beta( \Sigma C) \colon$}; & \node (a3) {$\cdots$}; & \node (a2) {$C_1$}; & \node (a1) {$C_0$}; & \node (a0) {$C_{-1}$}; & \node (a-1) {$0$}; & \node (a-2) {$\cdots$}; \\ \\ \\
  \node (B) {$\Sigma ( \beta (C)) \colon$}; & \node (b3) {$\cdots$}; & \node (b2) {$C_1$}; & \node (b1) {$C_0$}; & \node (b0) {$0$}; & \node (b-1) {$0$}; & \node (b-2) {$\cdots$}; \\
 };
 \draw [->] (A) -- (B);
 \foreach \i/\j in {3/2,2/1,1/0,0/-1,-1/-2}
  {
   \draw [->] (a\i) -- (a\j);
   \draw [->] (b\i) -- (b\j);
 };
 \draw [-,double] (a2) -- node [above] {} (b2);
 \draw [-,double] (a1) -- node [above] {} (b1);
 \draw [->] (a0) -- node [above] {} (b0);
 \draw [->] (a-1) -- node [above] {} (b-1);
\end{tikzpicture} \]
This is indeed an isomorphism in $\K^{-,\rm b} \mathscr{P} / \K^{\rm b} \mathscr{P}$, since its mapping cone in $\K^{-,\rm b} \mathscr{P}$ is isomorphic to the stalk complex with $C_{-1}$ in degree one, which belongs to $\K^{\rm b} \mathscr{P}$. Using a similar isomorphism, one checks that $\beta$ commutes with mapping cones. 

Next, we prove that $\beta$ is faithful. Let $f \colon C \to D$ be a morphism in $\Kac \mathscr{P}$ such that $\beta (f) = 0$. We may think of $f$ as a morphism of complexes. Then the condition $\beta (f) = 0$ means that, up to homotopy, the brutal truncation $\beta_{\geq 0} (f)$ factors through a bounded complex $C' \in \K^{\rm b} \mathscr{P}$. Choose a positive integer $n$ such that $C'_i =0$ for $i \ge n$. By truncating at degree $n$, we see that the induced map $\beta_{\geq n} (f) \circ \pi \colon C \to \beta_{\geq n} (D)$ of complexes is nullhomotopic. It then follows from Lemma \ref{extension}(b) that $f$ itself is nullhomotopic, and this shows that the functor $\beta$ is faithful.

It remains to show that $\beta$ is full. Let $\psi \colon \beta (C) \to \beta (D)$ be a morphism in $\K^{-, \rm b} \mathscr{P} / \K^{\rm b} \mathscr{P}$ between two complexes in the image of $\beta$. Then $\psi$ is represented by a diagram 
\begin{center}
  \begin{tikzpicture}[text centered]
    \node (C) at (0,0){$\beta_{\ge 0} (C)$};
    \node (D) at (2,0){$\beta_{\ge 0} (D)$};
    \node (C') at (1,1){$C'$};
    
    \begin{scope}[->,font=\scriptsize,midway]
      \draw (C') -- node[above left=-2pt]{$g$} (C);
      \draw (C') -- node[above right=-2pt]{$f$} (D);
      
    \end{scope}
  \end{tikzpicture}
\end{center}
of complexes and maps in $\K^{-, \rm b} \mathscr{P}$, where the mapping cone $C(g)$ of the map $g$ belongs to $\K^{\rm b} \mathscr{P}$. Up to homotopy, in sufficiently high degrees, the complex $C'$ then coincides with $\beta_{\ge 0} (C)$, and then also with $C$. Therefore, for some positive integer $n$, there is an equality $\beta_{\ge n}(C) = \beta_{\ge n}(C')$, and the truncation $\beta_{\ge n}(g)$ is the identity. Furthermore, the truncation $\beta_{\ge n}(f)$ is a morphism $\beta_{\ge n}(f) \colon \beta_{\ge n}(C) \to \beta_{\ge n}(D)$. By Lemma \ref{extension}(a), it admits and extension $\widehat{f} \colon C \to D$ of complexes in $\Kac \mathscr{P}$: we shall prove that $\psi = \beta ( \widehat{f} )$.

\sloppy Consider the solid part of the diagram
\begin{center}
  \begin{tikzpicture}[text centered]
    \node (C) at (-2,0){$\beta_{\ge 0} (C)$};
    \node (D) at (4,0){$\beta_{\ge 0} (D)$};
    \node (C') at (1,1.5){$C'$};
    \node (C'') at (1,-1.5){$\beta_{\ge 0}(C)$};
    \node (C0) at (1,0){$\beta_{\ge -1}(C)$};
    
    \begin{scope}[->,font=\scriptsize,midway]
      \draw (C') -- node[above left=-2pt]{$g$} (C);
      \draw (C') -- node[above right=-2pt]{$f$} (D);
      \draw (C'') -- node[below left=-2pt]{$1$} (C);
      \draw (C'') -- node[below right=-4pt]{$\beta_{\ge 0}( \widehat{f} )$} (D);
      \draw (C0) -- node[above]{$\pi$} (C);
      \draw (C0) -- node[above=-1pt]{$\beta_{\ge 0}( \widehat{f} ) \circ \pi$} (D);
      \draw[dashed] (C0) -- node[right=-1pt]{$\theta$} (C');
      \draw (C0) -- node[right=-1pt]{$\pi$} (C'');
     
    \end{scope}
  \end{tikzpicture}
\end{center}
of complexes and maps in $\K^{-, \rm b} \mathscr{P}$, where $\pi$ is the natural projection. The lower two triangles obviously commute. Furthermore, by Lemma \ref{extension}(a), 
the identity chain map $1 \colon \beta_{\ge n} (C) \to \beta_{\ge n} (C')$ admits an extension $\theta \colon \beta_{\ge -1} (C) \to C'$, and by construction the equalities
\begin{eqnarray*}
\beta_{\ge n} ( \pi ) & = & \beta_{\ge n} ( g \circ \theta ) \\
\beta_{\ge n} \left (  \beta_{\ge 0}( \widehat{f} ) \circ \pi \right ) & = & \beta_{\ge n} ( f \circ \theta )
\end{eqnarray*}
hold. The chain map $\pi - g \circ \theta$ can be viewed as a chain map $C \to \beta_{\ge 0} (C)$, and its truncation $\beta_{\ge n} ( \pi - g \circ \theta )$ is trivially nullhomotopic. Thus, by Lemma \ref{extension}(b), the map $\pi - g \circ \theta$ itself is nullhomotopic, and this shows that the top left triangle commutes. Similarly, the top right triangle commutes, hence the diagram is commutative. Consequently, the map $\psi$ equals $\beta ( \widehat{f} )$, and so the functor $\beta$ is full.
\end{proof}

We shall apply Theorem \ref{truncation} to the case when the additive category $\mathscr{P}$ is the category $\proj A$ of finitely generated projective modules over a left Noetherian ring $A$. The following result shows that, in this situation, if the functor is dense (i.e.\ an equivalence in view of Theorem \ref{truncation}), then all higher extensions between any module and the ring vanish. Recall first that the Verdier quotient
$$\K^{-,\rm b}(\proj A) / \K^{\rm b}(\proj A)$$
is the classical \emph{singularity category} $\D^{\rm b}_{\rm sg}(A)$ of $A$.

\begin{proposition}\label{dense}
\sloppy Let $A$ be a left Noetherian ring and $\proj A$ the category of finitely generated projective left $A$-modules. If the functor 
$$\beta_{\proj A} \colon \Kac(\proj A) \to \D^{\rm b}_{\rm sg}(A)$$
is dense, then $\Ext_A^n(M,A)=0$ for all $n \gg 0$ and every finitely generated left $A$-module $M$.
\end{proposition}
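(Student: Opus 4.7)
My plan is as follows. Let $M$ be a finitely generated $A$-module, and let $P = (\cdots \to P_1 \to P_0 \to 0) \in \K^{-, \rm b}(\proj A)$ be a projective resolution of $M$, so $P$ represents $M$ in the stable derived category. By the density hypothesis there exists a totally acyclic complex $T \in \Kac(\proj A)$ with $\beta_{\proj A}(T) = \beta_{\ge 0}(T) \cong P$ in $\D^{\rm b}_{\rm st}(A)$. I represent this isomorphism by a roof
\[ \beta_{\ge 0}(T) \xleftarrow{s} E \xrightarrow{f} P \]
in $\K^{-, \rm b}(\proj A)$, where the mapping cones $C(s)$ and $C(f)$ both belong to $\K^{\rm b}(\proj A)$.

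The key step is to show that the syzygies of $T$ and $M$ agree in high degrees up to projective summands. I use the standard fact that a complex $X \in \K^{\rm b}(\proj A)$, regarded as an object of $\K^{-,\rm b}(\proj A)$, has projective cycle modules $Z_n(X) = \ker d_n$ for $n \gg 0$. Applied to $C(s)$ and $C(f)$, this makes $Z_n(C(s))$ and $Z_n(C(f))$ projective for $n$ large. Unwinding the mapping cone construction, and using that both $\beta_{\ge 0}(T)$ and $P$ are acyclic in degree $n-1$ for $n \ge 2$, a direct computation produces short exact sequences
\[ 0 \to Z_n(T) \to Z_n(C(s)) \to Z_{n-1}(E) \to 0, \qquad 0 \to Z_n(P) \to Z_n(C(f)) \to Z_{n-1}(E) \to 0, \]
where I have identified $Z_n(\beta_{\ge 0}(T)) = Z_n(T)$ for $n \ge 1$. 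Since the middle terms are projective for $n \gg 0$, Schanuel's lemma gives a stable isomorphism $Z_n(T) \cong Z_n(P) \cong \Omega^{n+1}(M)$ for $n \gg 0$.

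To conclude, I use total acyclicity. Since $\Hom_A(T,A)$ is exact, dimension-shifting along the short exact sequences $0 \to Z_{n+1}(T) \to T_n \to Z_n(T) \to 0$ gives $\Ext_A^i(Z_n(T), A) = 0$ for all $i \ge 1$ and all $n$. Combined with the stable isomorphism of the preceding paragraph, $\Ext_A^i(\Omega^{n+1}M, A) = 0$ for $i \ge 1$ and $n \gg 0$, and the standard identification $\Ext_A^{n+1+i}(M,A) \cong \Ext_A^i(\Omega^{n+1}M, A)$ then yields the desired vanishing $\Ext_A^k(M,A) = 0$ for $k \gg 0$.

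The main technical hurdle is the second paragraph, namely the passage from an isomorphism in the Verdier quotient $\D^{\rm b}_{\rm st}(A)$ to a stable isomorphism of sufficiently high syzygies. It rests on the standard perfect-complex-has-eventually-projective-cycles fact together with the Schanuel-type analysis of the mapping cones in the roof; the remainder is a routine assembly with the classical Ext-vanishing coming from total acyclicity.
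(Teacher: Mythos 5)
Your argument is correct, and its overall strategy coincides with the paper's: resolve $M$ by a projective resolution $P \in \K^{-,\rm b}(\proj A)$, use density to find a totally acyclic $T$ with $\beta_{\proj A}(T) \cong P$ in $\D^{\rm b}_{\rm st}(A)$, compare the two in high degrees, and let total acyclicity force the vanishing. Where you differ is in how the comparison is made precise. The paper asserts briskly that for some $n \gg 0$ the truncations $\beta_{\ge n}(P)$ and $\beta_{\ge n}(T)$ coincide up to homotopy, and then reads off $\Ext_A^i(M,A) \simeq \Ho_{-i}\left(\Hom_A(P,A)\right) \simeq \Ho_{-i}\left(\Hom_A(T,A)\right) = 0$ for $i \ge n+1$, working entirely at the level of complexes; you instead pass to cycle modules: since both cones in the roof lie in $\K^{\rm b}(\proj A)$ they have projective cycles in high degrees, the cycle sequences of the cones are short exact for $n \ge 2$ (the obstruction to surjectivity lives in $H_{n-1}$ of $\beta_{\ge 0}(T)$, resp.\ of $P$, which vanishes there, as you note), and Schanuel's lemma then gives the stable isomorphism $Z_n(T) \simeq Z_n(P) = \Omega^{n+1}(M)$, after which you conclude via the classical $\Ext$-vanishing for syzygies of totally acyclic complexes. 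Your route is longer but more self-contained: it supplies a proof of precisely the point the paper leaves terse (in the paper this point is handled by the same kind of reasoning as in the fullness part of Theorem \ref{truncation}), at the cost of invoking the standard fact that an object of $\K^{\rm b}(\proj A)$, viewed in $\K^{-,\rm b}(\proj A)$, has projective cycles in high degrees --- which is indeed standard and provable by the same cycle-sequence trick applied to the cone of a homotopy equivalence with a bounded complex. One harmless slip: in the dimension-shifting step the sequence should read $0 \to Z_{n+1}(T) \to T_{n+1} \to Z_n(T) \to 0$, with middle term $T_{n+1}$ rather than $T_n$.
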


\begin{proof}
Let $M$ be a finitely generated left $A$-module, and
\[ P \colon \cdots \xrightarrow{d_4} P_3 \xrightarrow{d_3} P_2 \xrightarrow{d_2} P_1 \xrightarrow{d_1} P_0  \]
its projective resolution: this is a complex in $\K^{-,\rm b}(\proj A)$. Since the functor $\beta_{\proj A}$ is dense, the complex $P$ is isomorphic in $\D^{\rm b}_{\rm sg}(A)$ to $\beta_{\proj A} (T)$ for some totally acyclic complex $T \in \Kac(\proj A)$. For some $n \gg 0$, the two complexes $\beta_{\ge n}(P)$ and $\beta_{\ge n}(T)$ coincide up to homotopy, giving
$$
\Ext_A^i(M,A) \cong \Ho_{-i} \left ( \Hom_A(P,A) \right ) \cong \Ho_{-i} \left ( \Hom_A(T,A) \right )
$$
for all $i \ge n+1$. Since the complex $T$ is totally acyclic, the group $\Ho_{j} \left ( \Hom_A(T,A) \right )$ vanishes for all $j \in \mathbb{Z}$, and this proves the result.
\end{proof}

Specializing to the case when the ring is either left Artin or commutative Noetherian local, we obtain the following two corollaries. Recall that a commutative local ring is \emph{Gorenstein} if its injective dimension (as a module over itself) is finite.

\begin{corollary}\label{cor:dense-Artin}
Let $A$ be a left Artin ring, and $\proj A$ the category of finitely generated projective left $A$-modules. If the functor 
$$\beta_{\proj A} \colon \Kac(\proj A) \to \D^{\rm b}_{\rm sg}(A)$$
is dense, then the injective dimension of $A$ as a left module is finite.
\end{corollary}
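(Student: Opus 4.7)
The plan is to deduce the corollary from Proposition \ref{dense} combined with two classical features of left Artin rings: finiteness of the set of (isomorphism classes of) simple modules, and the fact that every finitely generated module has finite composition length.

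First I would invoke Proposition \ref{dense}: density of $\beta_{\proj A}$ yields, for every finitely generated left $A$-module $M$, an integer $n_M$ with $\Ext_A^n(M,A)=0$ for all $n \geq n_M$. In particular this holds for each simple left $A$-module $S$. Since $A$ is left Artin, there are only finitely many isomorphism classes of simple left $A$-modules, say $S_1, \dots, S_r$; I then set
\[ N \;=\; \max \{ n_{S_1}, \dots, n_{S_r} \}, \]
so that $\Ext_A^n(S_i, A)=0$ for all $i$ and all $n \geq N$. This uniformity step is the main point where the Artin hypothesis is used; without it, one would only get bounds depending on the module.

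Next I would bootstrap from simples to arbitrary finitely generated modules. Since $A$ is Artin, any finitely generated left $A$-module $M$ has finite composition length. I proceed by induction on the length: a short exact sequence $0 \to S \to M \to M' \to 0$ with $S$ simple and $M'$ of smaller length gives a long exact sequence whose terms $\Ext_A^n(S,A)$ and $\Ext_A^n(M',A)$ vanish for $n \geq N$ by the inductive hypothesis, so $\Ext_A^n(M,A)=0$ for $n \geq N$ as well. In particular, $\Ext_A^n(A/I,A)=0$ for every left ideal $I$ and every $n \geq N$.

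Finally, I would invoke the standard characterization of injective dimension over a left Noetherian ring: $\id {}_A A \leq d$ if and only if $\Ext_A^{d+1}(A/I,A)=0$ for every left ideal $I$ (a consequence of Baer's criterion applied to the $(d+1)$-st syzygy). Applying this with $d = N-1$ gives $\id {}_A A \leq N-1 < \infty$, which is the desired conclusion. The only nontrivial step is the uniform bound $N$ obtained from the finiteness of simples; everything else is standard long exact sequence bookkeeping.
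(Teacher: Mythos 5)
Your proof is correct and follows essentially the same route as the paper: the paper applies Proposition \ref{dense} once to the single module $\oplus_j S_j$ to get the uniform bound, and then states directly that the injective dimension of $A$ is at most $n$, leaving implicit the composition-length induction and Baer-criterion bookkeeping that you spell out. No substantive difference.
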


\begin{proof}
There are finitely many simple left $A$-modules $S_1, \dots, S_t$, and by Proposition~\ref{dense} there exists an integer $n$ such that $\Ext_A^i( \oplus S_j,A)=0$ for $i \ge n+1$. The injective dimension of $A$ is therefore at most $n$. 
\end{proof}

\begin{corollary}\label{cor:dense-local}
Let $A$ be a commutative Noetherian local ring, and $\proj A$ the category of finitely generated projective (i.e.\ free) $A$-modules. If the functor 
$$\beta_{\proj A} \colon \Kac(\proj A) \to \D^{\rm b}_{\rm sg}(A)$$
is dense, then $A$ is Gorenstein.
\end{corollary}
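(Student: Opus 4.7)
The plan is to feed the residue field into Proposition~\ref{dense} and convert the resulting Ext vanishing into finite injective dimension via a classical formula.

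First I would let $k = A/\mathfrak{m}$ be the residue field of $(A, \mathfrak{m})$. Since $k$ is a finitely generated $A$-module, Proposition~\ref{dense} produces an integer $N$ such that $\Ext_A^n(k, A) = 0$ for all $n > N$.

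Next I would invoke the classical identity, valid for any finitely generated module $M$ over a commutative Noetherian local ring,
\[ \id_A(M) \;=\; \sup\{\, i \geq 0 \mid \Ext_A^i(k, M) \neq 0 \,\}, \]
read as $+\infty$ if the set is unbounded. Applied to $M = A$, the previous step gives $\id_A A \leq N < \infty$, which is the definition of $A$ being Gorenstein.

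The substantive content is packaged into the classical identity above, whose proof rests on the Bass-number interpretation $\dim_k \Ext_A^i(k, M) = \mu^i(\mathfrak{m}, M)$ as the multiplicity of the injective hull $E(k)$ in degree $i$ of a minimal injective resolution of $M$, together with the fact that Bass numbers at non-maximal primes vanish in sufficiently high degrees once those at $\mathfrak{m}$ do. That is the main thing to check, but it is standard (and, if desired, can itself be reduced to the depth-zero case by killing a maximal regular sequence in $A$); once it is in hand, the corollary follows immediately.
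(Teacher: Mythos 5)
Your argument is correct and is essentially the paper's own proof: the paper likewise applies Proposition~\ref{dense} to the residue field $k$ and concludes $\id_A A \le n$ from the vanishing of $\Ext_A^i(k,A)$ in high degrees, leaving implicit the same classical Bass-number characterization of injective dimension that you spell out. Nothing further is needed.
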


\begin{proof}
Let $k$ be the residue field of $A$. By Proposition \ref{dense} there exists an integer $n$ such that $\Ext_A^i( k,A)=0$ for $i \ge n+1$. The injective dimension of $A$ is therefore at most $n$. 
\end{proof}

The following result shows that, in the situation of Proposition \ref{dense}, every injective module has finite projective dimension.

\begin{proposition}\label{injectives}
\sloppy Let $A$ be a left Noetherian ring and $\proj A$ the category of finitely generated projective left $A$-modules. If the functor 
$$\beta_{\proj A} \colon \Kac(\proj A) \to \D^{\rm b}_{\rm sg}(A)$$
is dense, then the projective dimension of every finitely generated injective left $A$-module is finite.
\end{proposition}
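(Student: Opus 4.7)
The plan mirrors that of Proposition~\ref{dense}, using density and injectivity of $I$. Take a projective resolution $P$ of the finitely generated injective left $A$-module $I$. By density, $P \cong \beta_{\proj A}(T)$ in $\D^{\rm b}_{\rm st}(A)$ for some $T \in \Kac(\proj A)$; arguing as in Proposition~\ref{dense}, there is an integer $n$ with $\beta_{\ge n}(T) \simeq \beta_{\ge n}(P)$ in $\K^{-,\rm b}(\proj A)$. Hence the syzygy $K := \Omega^n I$ agrees with the cycle $Z_n T$ up to finitely generated projective summands, so $K$ is Gorenstein projective, and a suitable shift of $T$ is a complete projective resolution of $K$.

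Now I would use that $I$ is injective. The functor $\Hom_A(-,I)$ is then exact, so applied to the acyclic complex $T$ of finitely generated projectives it yields an acyclic complex $\Hom_A(T,I)$ of abelian groups. For a Gorenstein projective module $K$ with complete projective resolution $T$, the cohomology groups $H^i(\Hom_A(T,I))$ coincide with the stable Hom groups $\Hom_{\D^{\rm b}_{\rm st}(A)}(K, \s^i I)$---this is the Tate-cohomology incarnation, made precise via the equivalence $\beta_{\proj A}\colon \Kac(\proj A) \to \D^{\rm b}_{\rm st}(A)$ (full and faithful by Theorem~\ref{truncation}, dense by hypothesis). Consequently,
$$\Hom_{\D^{\rm b}_{\rm st}(A)}(K, \s^i I) = 0 \quad \text{for every } i \in \mathbb{Z}.$$

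On the other hand, for each $j$ the short exact sequence $0 \to \Omega^j I \to P_{j-1} \to \Omega^{j-1} I \to 0$ gives a distinguished triangle in $\D^{\rm b}_{\rm st}(A)$ whose middle term $P_{j-1}$ is perfect and hence zero in $\D^{\rm b}_{\rm st}(A)$, so $\Omega^{j-1} I \cong \s \Omega^j I$; iterating yields $K \cong \s^{-n} I$. Setting $i = -n$ in the vanishing above,
$$\End_{\D^{\rm b}_{\rm st}(A)}(I) \cong \Hom_{\D^{\rm b}_{\rm st}(A)}(K, K) \cong \Hom_{\D^{\rm b}_{\rm st}(A)}(K, \s^{-n} I) = 0.$$
Hence the identity morphism of $I$ vanishes in $\D^{\rm b}_{\rm st}(A)$, which forces $I \cong 0$ there, i.e., $I$ is a direct summand of a perfect complex in $\D^{\rm b}(\mod A)$, and so $\pd_A I < \infty$.

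The main technical obstacle is the Tate-style identification $\Hom_{\D^{\rm b}_{\rm st}(A)}(K, \s^i N) \cong H^i(\Hom_A(T, N))$ for Gorenstein projective $K$ with complete projective resolution $T$; this requires carefully tracking how morphisms in the Verdier quotient $\D^{\rm b}_{\rm st}(A)$ correspond, through the equivalence $\beta_{\proj A}$, to morphisms of totally acyclic complexes in $\Kac(\proj A)$.
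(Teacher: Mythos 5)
Your overall strategy is sound and your endgame is correct: reducing to $\End_{\D^{\rm b}_{\rm st}(A)}(I)=0$ via $\Omega^{j-1}_A(I)\cong\s\,\Omega^j_A(I)$ in $\D^{\rm b}_{\rm st}(A)$, and concluding from $I\cong 0$ in $\D^{\rm b}_{\rm st}(A)$ that $I$ is perfect, is fine. You have also isolated the right module-theoretic input, namely that injectivity of $I$ makes $\Hom_A(T,I)$ acyclic, i.e.\ every map from a cycle module of $T$ to $I$ factors through a projective; this is exactly the fact the paper exploits (there it is phrased via the embedding of the cycle module into $T_{-1}$).

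However, the central step is a genuine gap, and it is the one you yourself flag: the identification $H^i\bigl(\Hom_A(T,I)\bigr)\cong\Hom_{\D^{\rm b}_{\rm st}(A)}(K,\s^i I)$ is asserted, not proved, and it does not follow from what you invoke. Full faithfulness of $\beta_{\proj A}$ (Theorem \ref{truncation}) together with density only gives $\Hom_{\D^{\rm b}_{\rm st}(A)}(\beta_{\proj A}(T),\s^i P_I)\cong\Hom_{\Kac(\proj A)}(T,\s^i T')$, where $T'$ is a totally acyclic complex with $\beta_{\proj A}(T')\cong P_I$; this homotopy group is Tate cohomology of $K$ with coefficients in the cycle modules of $T'$, which are Gorenstein projective modules, not $I$ (the complex $T'$ agrees with $P_I$ only in high degrees and has nothing to do with $I$ in low degrees). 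Since $I$ is not Gorenstein projective, the comparison of complete-resolution cohomology with Hom groups in the Verdier quotient over an arbitrary left Noetherian (non-Gorenstein) ring is precisely what must be established. It can be done -- for instance by dimension shifting using $\Ext_A^{>0}(Z,P)=0$ for cycles $Z$ of totally acyclic complexes and $P$ projective, which identifies maps $Z\to\Omega^m_A(I)$ modulo projectives with maps from the $m$-th cosyzygy of $Z$ along $T$ to $I$ modulo projectives, where exactness of $\Hom_A(-,I)$ finally gives the vanishing -- or, as the paper does, by analysing a roof $\beta_{\ge 0}(T)\leftarrow C\to P_I$ directly and using Lemma \ref{extension} to show that in high degrees every map between the images of the differentials factors through a projective, whence the roof is zero. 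Either way, filling this in is essentially the whole content of the paper's proof, so as written your argument is an outline with its key step missing rather than a complete proof.
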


\begin{proof}
Let $I$ be a finitely generated injective left $A$-module, and $P_I \in \K^{-, \rm b}(\proj A)$ a projective resolution of $I$. For every $n \ge 1$, denote by $\Omega_A^n(I)$ the image of the $n$th differential in $P_I$. It suffices to show that the identity on $P_I$ factors through an object in $\K^{\rm b}(\proj A)$, for this would imply that $\Omega_A^n(I)$ is projective for high $n$.

Let $T$ be a totally acyclic complex in $\Kac(\proj A)$, and $M$ the image of its zeroth differential. Then there is a monomorphism $f \colon M \to P$ for some $P \in \proj A$ (take for example $P = T_{-1}$). Since $I$ is injective, every map $M \to I$ factors through $f$. Now for every $n \ge 1$, denote by $\Omega_A^n(M)$ the image of the $n$th differential in $T$. We claim that every map $g \colon \Omega_A^n(M) \to \Omega_A^n (I)$ factors through a projective module. To see this, note that every such map lifts to a chain map $\beta_{\ge n}(T) \to \beta_{\ge n}(P_I)$, and by Lemma \ref{extension}(a) this chain map can be extended to a chain map $T \to P_I$. This gives a map $g' \colon M \to I$, which factors through a projective module by the above. Since $g = \Omega_A^n(g')$, the map $g$ also factors through a projective module, as claimed.

We show next that $\Hom_{\D^{\rm b}_{\rm sg}(A)}( \beta_{\proj A}(T),P_I)=0$. Any morphism $\psi$ in this group is (represented by) a diagram
\begin{center}
  \begin{tikzpicture}[text centered]
    \node (C1) at (0,0){$\beta_{\ge 0}(T)$};
    \node (C2) at (2,0){$P_I$};
    \node (D) at (1,1){$C$};
    
    \begin{scope}[->,font=\scriptsize,midway]
      \draw (D) -- node[above left=-2pt]{$g$} (C1);
      \draw (D) -- node[above right=-2pt]{$f$} (C2);
      
    \end{scope}
  \end{tikzpicture}
\end{center}
in $\K^{-, \rm b}(\proj A)$, with the cone of $g$ belonging to $\K^{\rm b}(\proj A)$. As in the proof of Theorem \ref{truncation}, we can assume that (up to homotopy) the two complexes $\beta_{\ge 0}(T)$ and $C$ agree in high degrees. From the above, it then follows that for high $n$, every map $B^n(C) \to B^n (P_I)$ factors through a projective module, where $B^n(D)$ denotes the image of the $n$th differential in a complex $D$. This shows that $\psi =0$.

We can now show that the identity on $P_I$ factors through an object in $\K^{\rm b}(\proj A)$. Namely, since the functor $\beta_{\proj A}$ is dense, the complex $P_I$ is isomorphic in $\D^{\rm b}_{\rm sg}(A)$ to $\beta_{\proj A}(T)$ for some acyclic complex $T \in \Kac(\proj A)$. From what we showed above, we obtain
$$\Hom_{\D^{\rm b}_{\rm sg}(A)}( P_I,P_I) \simeq \Hom_{\D^{\rm b}_{\rm sg}(A)}( \beta_{\proj A}(T),P_I)=0,$$
and the result follows.
\end{proof}

Recall that a Noetherian ring (i.e.\ a ring that is both left and right Noetherian) is \emph{Gorenstein} if its injective dimensions both as a left and as a right module are finite. By a classical result of Zaks (cf.\ \cite[Lemma A]{Zaks}), the two injective dimensions then coincide. However, it is an open question whether a Noetherian ring of finite selfinjective dimension on one side is of finite selfinjective dimension on both sides, and therefore Gorenstein.

We have now come to the main result of this section. It deals with Artin rings (i.e.\ rings that are both left and right Artin) and commutative Noetherian local rings. Namely, for such a ring $A$, the functor $\beta_{\proj A}$ is dense if and only if $A$ is Gorenstein. As mentioned in the introduction, one may regard this as a triangulated category reformulation of the result of Auslander and Bridger 
\cite[Theorem (4.20)]{AuslanderBridger}; our approach was inspired by \cite[Theorem 4.4.1]{Buchweitz}.
   
\begin{theorem}\label{Gorenstein}
Let $A$ be either an Artin ring or a commutative Noetherian local ring, and $\proj A$ the category of finitely generated projective left $A$-modules. Then the functor 
$$\beta_{\proj A} \colon \Kac(\proj A) \to \D^{\rm b}_{\rm sg}(A)$$
is dense if and only if $A$ is Gorenstein.
\end{theorem}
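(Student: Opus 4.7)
The plan is to treat the two implications separately: the ``only if'' direction assembles the corollaries and proposition already established, while the ``if'' direction reconstructs the classical argument due essentially to Buchweitz.

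For necessity, if $A$ is commutative Noetherian local the claim is precisely Corollary~\ref{cor:dense-local}. If $A$ is Artin, Corollary~\ref{cor:dense-Artin} yields that the injective dimension of $A$ as a left module is finite, so only the finiteness of the right injective dimension remains. I would derive this from Proposition~\ref{injectives}: the minimal injective cogenerator $E = \bigoplus_i E(S_i)$ of $\mod A$ is finitely generated (each $E(S_i)$ is, since $A$ is Artin), hence has finite projective dimension. Invoking the standard Morita duality $D \colon \mod A \to \mod A^{\rm op}$ for Artin rings, which sends $A$ viewed as a right module to $E$ and interchanges finite projective and finite injective dimension between the two sides, I then conclude that the right injective dimension of $A$ equals the left projective dimension of $E$, which is finite. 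Hence $A$ is Gorenstein.

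For sufficiency, assume $A$ is Gorenstein with injective dimension $n$. Given any object of $\D^{\rm b}_{\rm st}(A)$, represented by some $P \in \K^{-,\rm b}(\proj A)$, I use the equivalence $\K^{-,\rm b}(\proj A) \simeq \D^{\rm b}(\mod A)$ to reduce to the case where $P = P_M$ is the projective resolution of a finitely generated module $M$. The decisive classical input is that over a Gorenstein ring the $n$th syzygy $\Omega^n M$ is Gorenstein projective: there exists a totally acyclic complex $T' \in \Kac(\proj A)$ whose zeroth cycle module is $\Omega^n M$. Splicing the ``negative half'' of $T'$ onto $\beta_{\ge n}(P_M)$ produces a totally acyclic complex $\widehat{T}$ with $\beta_{\ge n}(\widehat{T}) = \beta_{\ge n}(P_M)$. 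Since the two natural truncation maps $P_M \to \beta_{\ge n}(P_M) \leftarrow \beta_{\ge 0}(\widehat{T})$ both have cones in $\K^{\rm b}(\proj A)$, they become isomorphisms in $\D^{\rm b}_{\rm st}(A)$, yielding $\beta_{\proj A}(\widehat{T}) \cong P_M$ in the quotient, and density follows.

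The main obstacle is the ``if'' direction, specifically the classical input that sufficiently deep syzygies over a Gorenstein ring are Gorenstein projective; this is where the Gorenstein hypothesis is essentially consumed, and it is the substantive content of Buchweitz's equivalence theorem. The Morita duality step in the Artin case of necessity is by comparison routine.
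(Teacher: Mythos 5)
Your proposal is correct and follows essentially the same route as the paper: necessity via Corollaries \ref{cor:dense-Artin}, \ref{cor:dense-local} and Proposition \ref{injectives} together with the left--right duality in the Artin case, and sufficiency by truncating at a high syzygy, which is maximal Cohen--Macaulay (Gorenstein projective) over a Gorenstein ring, splicing with a projective coresolution to get a totally acyclic complex, and noting that the brutal truncation maps have bounded cones. The only cosmetic difference is that the paper applies this truncation-and-splice argument directly to an arbitrary complex $C \in \K^{-,\rm b}(\proj A)$ instead of first reducing to a projective resolution of a module.
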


\begin{proof}
Suppose the functor $\beta_{\proj A}$ is dense. If $A$ is local, then it is Gorenstein by Corollary \ref{cor:dense-local}. If $A$ is Artin, then the injective dimension of $A$ as a left module is finite by Corollary \ref{cor:dense-Artin}. Moreover, by Proposition \ref{injectives}, every finitely generated injective left $A$-module has finite projective dimension. The duality between finitely generated left and right modules then implies that every finitely generated projective right $A$-module has finite injective dimension. Therefore $A$ is Gorenstein.

Conversely, suppose that $A$ is Gorenstein, and let $C$ be a complex in $\K^{-,\rm b}(\proj A)$. Using the same notation as in the previous proof, there is an integer $n$ such that the $A$-module $B^n(C)$ is maximal Cohen-Macaulay, and such that $\beta_{\ge n}(C)$ is a projective resolution of $B^n(C)$. Since $B^n(C)$ is maximal Cohen-Macaulay, it admits a projective co-resolution $C'$, and splicing $\beta_{\ge n}(C)$ and $C'$ at $B^n(C)$ gives a totally acyclic complex $T \in \Kac(\proj A)$. Since $\beta_{\ge n}(C) = \beta_{\ge n}(T)$, the complexes $C$ and $\beta_{\proj A} (T)$ are isomorphic in $\D^{\rm b}_{\rm sg}(A)$, hence the functor $\beta_{\proj A}$ is dense.
\end{proof}

\section{Defect categories}\label{defect}

Motivated by Theorem \ref{truncation} and Theorem \ref{Gorenstein}, we introduce triangulated defect categories for any left Noetherian ring $A$. 

Since the triangle functor
$$\beta_{\proj A} \colon \Kac(\proj A) \to \D^{\rm b}_{\rm sg}(A)$$
is fully faithful by Theorem \ref{truncation}, the category $\Kac(\proj A)$ embeds in $\D^{\rm b}_{\rm sg}(A)$ as the image of $\beta_{\proj A}$. The isomorphism closure $\langle \Image \beta_{\proj A} \rangle$ is a thick subcategory of $\D^{\rm b}_{\rm sg}(A)$, hence we may form the corresponding Verdier quotient.

\begin{definition}
The \emph{Gorenstein defect category} of a left Noetherian ring $A$ is the Verdier quotient
$$\D^{\rm b}_{\rm G}(A) \stackrel{\text{def}}{=} \D^{\rm b}_{\rm sg}(A) / \langle \Image\beta_{\proj A} \rangle,$$
where $\proj A$ is the category of finitely generated projective left $A$-modules.
\end{definition}

In terms of the Gorenstein defect category, Theorem \ref{Gorenstein} takes the following form.

\begin{theorem}\label{GorensteinDefect}
If $A$ is either an Artin ring or a commutative Noetherian local ring, then $\D^{\rm b}_{\rm G}(A)=0$ if and only if $A$ is Gorenstein.
\end{theorem}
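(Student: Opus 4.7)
The plan is to reduce Theorem \ref{GorensteinDefect} directly to Theorem \ref{Gorenstein} by unpacking the definition of $\D^{\rm b}_{\rm G}(A)$. The content of this statement is really formal: the assertion ``the Gorenstein defect category vanishes'' is just a restatement of ``the functor $\beta_{\proj A}$ is dense'', and Theorem \ref{Gorenstein} already settles the latter in the Artin and commutative Noetherian local cases.

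First I would record the general principle that for a thick subcategory $\mathcal{S}$ of a triangulated category $\mathcal{T}$, the Verdier quotient $\mathcal{T}/\mathcal{S}$ is zero if and only if $\mathcal{S}=\mathcal{T}$. (In one direction: if $\mathcal{S}=\mathcal{T}$ every object becomes isomorphic to zero in the quotient; conversely, an object $X$ is zero in $\mathcal{T}/\mathcal{S}$ exactly when the map $0\to X$ is made invertible, which forces its cone to lie in $\mathcal{S}$, hence $X\in\mathcal{S}$ by thickness.) Applied to $\mathcal{S}=\langle\Image\beta_{\proj A}\rangle\subseteq \D^{\rm b}_{\rm st}(A)$, this shows
\[
\D^{\rm b}_{\rm G}(A)=0 \iff \langle\Image\beta_{\proj A}\rangle = \D^{\rm b}_{\rm st}(A).
\]

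Next I would unravel the right-hand condition. By construction $\langle\Image\beta_{\proj A}\rangle$ is the isomorphism closure of $\{\beta_{\proj A}(C)\mid C\in\Kac(\proj A)\}$, so it equals all of $\D^{\rm b}_{\rm st}(A)$ exactly when every object of $\D^{\rm b}_{\rm st}(A)$ is isomorphic to some $\beta_{\proj A}(C)$, i.e.\ exactly when $\beta_{\proj A}$ is dense. Combining this with the previous step yields
\[
\D^{\rm b}_{\rm G}(A)=0 \iff \beta_{\proj A}\ \text{is dense}.
\]

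Finally I would invoke Theorem \ref{Gorenstein}, which under the standing hypothesis that $A$ is either an Artin ring or a commutative Noetherian local ring says that $\beta_{\proj A}$ is dense if and only if $A$ is Gorenstein. Chaining these equivalences gives the desired statement. There is no real obstacle here: all the substantive work (both directions of the density criterion) was carried out in Theorem \ref{Gorenstein} and its supporting results, and the only thing to verify is the tautological translation between ``quotient vanishes'' and ``subcategory is everything'', which is standard Verdier-quotient bookkeeping.
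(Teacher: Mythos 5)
Your proposal is correct and matches the paper's treatment: the paper states Theorem \ref{GorensteinDefect} as a direct reformulation of Theorem \ref{Gorenstein}, relying on exactly the translation you spell out, namely that a Verdier quotient by a thick subcategory vanishes precisely when the subcategory is everything, i.e.\ precisely when $\beta_{\proj A}$ is dense. Your only addition is to make this bookkeeping explicit, which is fine.
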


We can in fact define related defect categories as follows.  Let $\mathcal C$ be a thick subcategory of $\Kac(\proj A)$, and $\beta_{\mathcal C}$ the restriction of $\beta_{\proj A}$ to $\mathcal C$.  Then the category $\mathcal C$ embeds in $\D^{\rm b}_{\rm sg}(A)$ as the image of $\beta_{\mathcal C}$. The isomorphism closure $\langle \Image \beta_{\mathcal C} \rangle$ is a thick subcategory of $\D^{\rm b}_{\rm sg}(A)$, and we make the following definition.

\begin{definition}
The \emph{$\mathcal C$ defect category} of a left Noetherian ring $A$ is the Verdier quotient
$$
\D^{\rm b}_{\mathcal C}(A) \stackrel{\text{def}}{=} \D^{\rm b}_{\rm sg}(A) / \langle \Image\beta_{\mathcal C} \rangle
$$
\end{definition}

We have the following results complementing Theorem \ref{GorensteinDefect}, in the commutative local case. Recall that a complex $C$ over a commutative local ring $(A,\mathfrak m)$ is called \emph{minimal} if $d_n^C(C_n)\subseteq \mathfrak mC_{n-1}$ for all $n\in \mathbb Z$.  Every complex in $\Kac(\proj A)$ is isomorphic in $\Kac(\proj A)$ to a minimal complex which is unique up to isomorphism. We define the \emph{complexity} of a minimal complex $C\in\Kac(\proj A)$ to be 
$$
\cx_AC \stackrel{\text{def}}{=} \inf\{d\in\mathbb N \mid \rank(C_n\oplus C_{-n}) \le bn^{d-1}\text{ for } n \gg 0 \text{ and some real number $b$}\},
$$
and extend this definition to all of $\Kac(\proj A)$ by setting $\cx_AC =\cx_AC'$ if $C$ is isomorphic in $\Kac(\proj A)$ to a minimal complex $C'$.

\begin{theorem} Let $A$ be a commutative local ring.

(1) Let $\mathcal C$ be the full subcategory of $\Kac(\proj A)$ consisting of complexes of finite complexity.  Then $\D^{\rm b}_{\mathcal C}(A)=0$ if an only if $A$ is a complete intersection.

(2) Let $\mathcal C$ be the full subcategory of $\Kac(\proj A)$ consisting of complexes of complexity at most $c$.  Then $\D^{\rm b}_{\mathcal C}(A)=0$ if an only if $A$ is a codimension $c$ complete intersection.
\end{theorem}

\begin{proof}
For (1), if $\D^{\rm b}_{\mathcal C}(A)=0$ then the free resolution of the residue field of $A$ has finite complexity. By a classical result of Gulliksen \cite[Theorem 2.3]{Gulliksen}, this implies that $A$ is a complete intersection. Conversely, the same result by Gulliksen shows that if $A$ is a complete intersection, then every minimal free resolution has finite complexity.

For (2), it is well known that over a complete intersection, the complexity of a module is at most that of the residue field, and the latter equals the codimension; see for instance \cite[Remark 8.1.1.2]{Avramov}.
\end{proof}

Theorem \ref{GorensteinDefect} suggests that the size of the Gorenstein defect category (of an Artin ring or a commutative Noetherian local ring) measures in some sense ``how far" the ring is from being Gorenstein. One measure of the size of the Gorenstein defect category is its dimension in the sense of Rouquier (cf.\ \cite{Rouquier}); we recall now the definition. Let $\mathcal T$ be a triangulated category with suspension functor $\s$. Given two subcategories $\mathcal A$ and $\mathcal B$ of $\mathcal T$, we denote by
$\mathcal A \ast \mathcal B$ the following full subcategory of $\mathcal T$: an object $X \in \mathcal T$ belongs to $\mathcal A \ast \mathcal B$ if and only if there exists a distinguished triangle
$$A \to X \to B \to \s A$$
in $\mathcal T$, with $A \in \mathcal A$ and $B \in \mathcal B$. Furthermore, we denote by $\thick_{\mathcal T}^1 ( \mathcal A )$ the full subcategory of $\mathcal T$ consisting of all objects isomorphic to a direct summand of a finite direct sum of suspensions of objects in $\mathcal A$. Then we inductively define $\thick_{\mathcal T}^n ( \mathcal A )$ by$$\thick_{\mathcal T}^n ( \mathcal A ) \stackrel{\text{def}}{=} \thick_{\mathcal T}^1 \left ( \thick_{\mathcal T}^{n-1} ( \mathcal A ) \ast \thick_{\mathcal T}^1 ( \mathcal A ) \right ).$$
The \emph{dimension} of $\mathcal T$, denoted $\dim \mathcal T$, is defined as 
$$\dim \mathcal T \stackrel{\text{def}}{=} \inf \{ d \in \mathbb{Z} \mid \text{ there exists an object } A \in \mathcal T \text{ such that } \mathcal T = \thick_{\mathcal T}^{d+1}(A) \}.$$
It would be interesting to find criteria which characterize the rings whose Gorenstein defect categories are $n$-dimensional. An answer to the following question would be a natural start.

\begin{question}
What characterizes Artin rings and commutative Noetherian local rings with zero-dimensional Gorenstein defect categories?
\end{question}

We end with an important example; it is the smallest commutative local ring --- in a sense which we make precise --- for which the Gorenstein defect category is interesting. We show that the dimension of the defect category is at most one. Note that it is also a finite dimensional $k$-algebra. 

We need the following lemma, which gives an upper bound on the dimension of the singularity category. It improves \cite[Proposition 3.7]{Rouquier1}; if $A$ is an Artin ring, then the lemma shows that the dimension of $ \D^{\rm b}_{\rm sg}(A)$ is at most $\ell \ell (A) -2$, where $\ell \ell (A)$ is the Loewy length of $A$.

\begin{lemma}\label{qube}
Let $A$ be either a commutative Noetherian local ring or an Artin ring, and let $\mathfrak r$ be its radical. If $\mathfrak r^n =0$, then $\dim \D^{\rm b}_{\rm sg}(A) \le n-2$.
\end{lemma}

\begin{proof}
Consider the stalk complex $A / \mathfrak r$ in the bounded derived category $\D^{\rm b}(A)$. It follows from the proof of \cite[Lemma 7.35]{Rouquier} that $\D^{\rm b}(A) = \thick_{ \D^{\rm b}(A)}^n( A / \mathfrak r )$. The proof also shows that if $M$ is an $A$-module with $\mathfrak r^tM=0$, then as a stalk complex $M$ belongs to $\thick_{ \D^{\rm b}(A)}^t( A / \mathfrak r )$. In the singularity category $\D^{\rm b}_{\rm sg}(A)$, every indecomposable object is isomorphic to a stalk complex of a module, and this stalk complex is again isomorphic to the shift of the stalk complex given by its first syzygy. Since $\mathfrak r^{n-1}\Omega_A^1(M)=0$ for every $A$-module $M$, we see that $\D^{\rm b}_{\rm sg}(A) = \thick_{ \D^{\rm b}_{\rm sg}(A)}^{n-1}( A / \mathfrak r )$. Consequently, the dimension of $\D^{\rm b}_{\rm sg}(A)$ is at most $n-2$.
\end{proof}

\begin{example}
Let $k$ be a field and $R=k[x,y,z]/(x^2,y^2,z^2,yz)$, a quotient of a polynomial ring.  Note that  $R$ is a finite dimensional commutative local $k$-algebra with maximal ideal $\mathfrak n=(x,y,z)$, embedding dimension 3, Loewy length 3 and length 6.

The image of $\beta_{\proj R}$ is nonzero, since $R$ admits, for example, the totally acyclic complex
$$
\cdots \xrightarrow{x} R \xrightarrow{x} R \xrightarrow{x} R \xrightarrow{x}  \cdots
$$ 
However, $R$ is not Gorenstein as its socle is clearly 2-dimensional.  We claim that $R$ is the smallest non-Gorenstein ring to admit non-trivial totally acyclic complexes, in terms of the invariants listed above.  Indeed, consider a commutative local ring $A$ with maximal ideal $\mathfrak m$ satisfying $\mathfrak m^n=0$.  If the Loewy length of $A$ were 2, that is $n=2$,  then $A$ would be of minimal multiplicity, and non-Gorenstein rings with minimal multiplicity are Golod 
\cite[5.2.8]{Avramov}.  By \cite[3.5]{AvramovMartsinkovsky} Golod rings do not admit non-trivial totally reflexive modules. Thus one is forced to consider rings with Loewy length at least 3.  Now consider the embedding dimension of $A$.  If $A$ has embedding dimension 1 or 2, then $A$ is a complete intersection \cite{Scheja}, which is a Gorenstein ring. Therefore one must consider rings of embedding dimension at least 3.  Now by a theorem due to Christensen and Veliche \cite[Theorem A]{ChristensenVeliche}, if $A$ admits a totally acyclic complex, and $\mathfrak m^3=0$, then the socle of $A$ is $\mathfrak m^2$ and the length of $A$ is twice its embedding dimension.  Thus the ring $R$ above is the least commutative local non-Gorenstein ring, in terms of Loewy length, embedding dimension and length, which admits non-trivial totally acyclic complexes.

Since the maximal ideal $\mathfrak n$ of $R$ satisfies $\mathfrak n^3=0$, Lemma \ref{qube} shows that the dimension of the singularity category  $\D^{\rm b}_{\rm sg}(R)$ is at most one. Since the Gorenstein defect category $\D^{\rm b}_{\rm G}(A)$ is a quotient of $\D^{\rm b}_{\rm sg}(R)$, it follows from \cite[Lemma 3.4]{Rouquier} that $\dim \D^{\rm b}_{\rm G}(A) \le 1$.
\end{example}

\end{document}